\newtheorem{theorem}{Theorem}
\theoremstyle{plain}
\newtheorem{corollary}{Corollary}
\newtheorem{definition}{Definition}
\newtheorem{example}{Example}
\newtheorem{remark}{Remark}
\newtheorem{ass}{Assumption}[section]
\numberwithin{equation}{section}
\begin{document}
\title{Coupled coincidence point theorems for nonlinear contractions in partially ordered metric spaces}
\author{Vasile Berinde}

\begin{abstract}
We obtain  coupled coincidence and coupled common fixed point theorems for mixed $g$-monotone nonlinear operators $F:X \times X \rightarrow  X$ in partially ordered metric spaces. Our results are generalizations of recent coincidence point theorems due to Lakshmikantham and \' Ciri\' c [Lakshmikantham, V., \' Ciri\' c, L.,  \textit{Coupled fixed point theorems for nonlinear contractions in partially ordered metric spaces}, Nonlinear Anal. \textbf{70} (2009), 4341–-4349], of coupled fixed point theorems established by  Bhaskar and Lakshmikantham [T.G. Bhaskar, V. Lakshmikantham, \textit{Fixed point theorems in partially ordered metric spaces and applications}, Nonlinear Anal. \textbf{65} (2006) 1379-1393] and also include as particular cases several related results in very recent literature.
\end{abstract}
\maketitle

\pagestyle{myheadings} \markboth{Vasile Berinde} {Coupled coincidence point theorems for nonlinear contractions }

\section{Introduction and preliminaries} 

A very recent trend in metrical fixed point theory, initiated by Ran  and  Reurings  \cite{Ran}, and continued by Nieto  and  Lopez  \cite{Nie06}, \cite{Nie07}, Bhaskar  and  Lakshmikantham  \cite{Bha}, Agarwal  et  al.  \cite{Aga}, Lakshmikantham and Ciric \cite{LakC}, Luong and Thuan \cite{Luong} and many other authors, is to consider a partial order on the ambient metric space $(X,d)$ and to transfer a part of the contractive property of the nonlinear operators into its monotonicity properties. This approach turned out to be very productive, see for example \cite{Aga}, \cite{Choud}-\cite{Ran}, and the obtained results found important applications to the existence of solutions for matrix equations or ordinary differential equations and integral equations, see  \cite{Bha}, \cite{Luong}, \cite{Nie06}, \cite{Nie07}, \cite{Ran} and references therein.


In this context, the main novelty brought by Bhaskar  and  Lakshmikantham \cite{Bha} and then continued by Lakshmikantham and Ciric \cite{LakC} and other authors, was to consider nonlinear bivariate mappings $F:X \times X \rightarrow  X$ in direct connection with their so called mixed monotone property, and to study the existence (and uniqueness) of \textit{coupled fixed points} for such mappings.

To fix the context in which we are placing our results, recall the following notions. Let$\left(X,\leq\right)$ be a partially ordered set and  endow the product space $X \times X$ with the following partial order:$$\textnormal{for } \left(x,y\right), \left(u,v\right) \in X \times X,  \left(u,v\right) \leq \left(x,y\right) \Leftrightarrow x\geq u,  y\leq v.$$

We say that a mapping $F:X \times X \rightarrow X$ has the \textit{mixed monotone property} if $F\left(x,y\right)$ is monotone non-decreasing in $ x$ and is monotone non-increasing in $y$, that is, for any $x, y \in X,$   
$$  x_{1}, x_{2} \in X,  x_{1} \leq  x_{2} \Rightarrow  F\left(x_{1},y\right) \leq  F\left(x_{2},y\right) $$
  and, respectively,
$$ y_{1},  y_{2} \in  X, y_{1} \leq y_{2} \Rightarrow F\left(x,y_{1}\right)  \geq F\left(x,y_{2}\right). $$

A pair $ \left(x,y\right) \in X \times X $ is called a \textit{coupled  fixed  point} of the mapping $F$ if                                              $$ F\left(x,y\right) = x,  F\left(y,x\right) = y.$$
The next theorem is the main theoretical result in \cite{Bha}.

\begin{theorem}[Bhaskar and Lakshmikantham \cite{Bha}]\label{th1}
	Let  $\left(X,\leq\right)$ be a partially ordered set and suppose there is a metric $d$  on $X$ such that $\left(X,d\right)$  is a complete metric space. Let $F : X \times X \rightarrow X $ be a continuous mapping having the mixed monotone property on $X$. Assume that there exists a constant $k \in \left[0,1\right)$  with                                                                                   
\begin{equation} \label{Bhas}
	d\left(F\left(x,y\right),F\left(u,v\right)\right) \leq \frac{k}{2}\left[d\left(x,u\right) + d\left(y,v\right)\right],\textnormal{ for each }x \geq u, y \leq  v.
\end{equation}  
If there exist $x_{0}, y_{0} \in X$ such that  
$$                                                                                       
x_{0} \leq F\left(x_{0},y_{0}\right)\textrm{ and }y_{0} \geq F\left(y_{0},x_{0}\right),
$$
 then there exist $x, y \in X$ such that $$x = F\left(x,y\right)\textnormal{ and }y = F\left(y,x\right).$$ 
	\end{theorem}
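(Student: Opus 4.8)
The plan is to construct Picard-type iterates and show they form Cauchy sequences whose limits furnish the desired coupled fixed point. Starting from the given $x_0, y_0$, I would define two sequences by
\begin{equation}
x_{n+1} = F(x_n, y_n), \qquad y_{n+1} = F(y_n, x_n), \qquad n \geq 0.
\end{equation}
The first step is to prove, by induction on $n$, that the iterates are correctly ordered, namely that $\{x_n\}$ is non-decreasing and $\{y_n\}$ is non-increasing: $x_n \leq x_{n+1}$ and $y_n \geq y_{n+1}$. The base case is exactly the hypothesis $x_0 \leq F(x_0,y_0)$ and $y_0 \geq F(y_0,x_0)$; the inductive step combines the two halves of the mixed monotone property, since increasing the first slot and decreasing the second slot of $F$ both push $F$ upward when passing from $(x_{n-1},y_{n-1})$ to $(x_n,y_n)$.

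I would establish this ordering first because it is precisely what licenses the use of the contractive condition \eqref{Bhas}, which is only assumed for comparable pairs $x \geq u$, $y \leq v$. With the ordering in hand, apply \eqref{Bhas} twice at each step: once to $d(x_{n+1},x_{n+2}) = d(F(x_n,y_n),F(x_{n+1},y_{n+1}))$ and once to $d(y_{n+1},y_{n+2}) = d(F(y_n,x_n),F(y_{n+1},x_{n+1}))$. Setting $\delta_n = d(x_n,x_{n+1}) + d(y_n,y_{n+1})$ and adding the two resulting inequalities collapses everything into a single scalar recursion
\begin{equation}
\delta_{n+1} \leq k\, \delta_n, \qquad \text{hence} \qquad \delta_n \leq k^n \delta_0.
\end{equation}

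Since $k < 1$, the geometric decay of $\delta_n$ yields, via the triangle inequality and the estimate $\sum_{j \geq n} k^j = k^n/(1-k)$, that both $\{x_n\}$ and $\{y_n\}$ are Cauchy sequences in $X$. Completeness of $(X,d)$ then provides limits $x_n \to x$ and $y_n \to y$. Finally, I would pass to the limit in the defining recursion using continuity of $F$: from $x_{n+1} = F(x_n,y_n)$ one obtains $x = F(x,y)$, and from $y_{n+1} = F(y_n,x_n)$ one obtains $y = F(y,x)$, which is exactly the desired conclusion.

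The main obstacle is not any single estimate but the bookkeeping of order directions: one must track carefully that the iterates remain comparable in the sense required by \eqref{Bhas} throughout the induction, and in particular that the roles of the two coordinates are reversed in the $y$-sequence, where the first slot of $F$ now carries the non-increasing sequence and the second slot the non-decreasing one. Getting these directions right is what makes the symmetric addition of the two contraction inequalities work and produce the clean recursion for $\delta_n$. Continuity of $F$ enters only at the very last step; everywhere else the argument is driven by the monotonicity and the contraction.
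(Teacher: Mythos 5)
Your proposal is correct, and it is essentially the classical iteration argument: the paper does not prove Theorem \ref{th1} itself (it is quoted from \cite{Bha}), but its proof of the generalization Theorem \ref{th3} follows the same skeleton --- Picard iterates, induction on the order of the iterates to license the contractive condition, a scalar recursion for $\delta_n$, Cauchyness, completeness, and continuity at the last step. The only substantive difference is forced by generality: with the linear constant $k$ you may conclude via the geometric series $\sum k^n$, whereas the paper's $\varphi$-contraction requires the contradiction argument with subsequences $n(k)>m(k)$ to establish the Cauchy property, and the paper packages the two sequences into one via the product metric $d_2$ on $X^2$, which is cosmetic here.
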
        
	
As shown in \cite{Bha}, the continuity assumption of $F$ in Theorem \ref{th1} can be replaced by the following property imposed on the ambient space $X$:

\begin{ass} \label{1.1}
$X$ has the property that

(i) if a non-decreasing sequence $\{x_n\}_{n=0}^{\infty}\subset X$ converges to $x$, then $x_n\leq x$ for all $n$;

(ii) if a non-increasing sequence $\{x_n\}_{n=0}^{\infty}\subset X$ converges to $x$, then $x_n\geq x$ for all $n$;
\end{ass}
	
 These results were then extended and generalized by several authors in the last five years, see  \cite{LakC}, \cite{Luong} and references therein, to restrict citing only the ones strictly related to our approach in this paper. Amongst these generalizations, we refer especially to the one obtained in \cite{LakC}, which considered instead of \eqref{Bhas} a more general contractive condition
and established corresponding coincidence point theorems.

The following concepts were introduced in \cite{LakC}.

\begin{definition}\label{def1}
 Let $ \left(X,\leq \right)$  be a partially ordered set and  $F:X \times X \rightarrow X, g: X \rightarrow X$. We say that $F$ has the mixed $g$-monotone property if $F$ is monotone $g$-non-decreasing in its first argument and is monotone $g$-non-increasing in its second argument, that is,  for any     $ x, y \in X,$   
$$  x_{1}, x_{2} \in X,  g\left(x_{1}\right) \leq  g\left(x_{2}\right) \Rightarrow  F\left(x_{1},y\right) \leq  F\left(x_{2},y\right), 
$$  
and
$$ y_{1},  y_{2} \in  X, g\left(y_{1}\right) \leq g\left(y_{2}\right) \Rightarrow F\left(x,y_{1}\right)  \geq F\left(x,y_{2}\right). $$ 
\end{definition}
Note that if $g$ is the identity mapping, then Definition \ref{def1} reduces to Definition 1.1 in \cite{Bha} of mixed monotone property.

\begin{definition} \label{def2} 
An element $ \left(x,y\right) \in X \times X $ is called a \textit{coupled coincidence point} of the mappings  $F:X \times X \rightarrow X$ and $g: X \rightarrow X$ if                                              
$$ 
F\left(x,y\right) = g\left(x\right) \textnormal{ and }  F\left(y,x\right) = g\left(y\right).
$$
\end{definition} 

\begin{definition} \label{def3} 
Let X be a non-empty  set. We say that the mappings $F:X \times X \rightarrow X$ and $g: X \rightarrow X$ commute if
$$
g(F(x,y))=F(g(x),g(y)),
$$
for all $x,y\in X$.
\end{definition}

Using basically these concepts, the results obtained in \cite{LakC} are some coincidence theorems and coupled common fixed point theorems obtained basically by considering a more general contractive condition than condition \eqref{Bhas} used in \cite{Bha}. The main result in \cite{LakC} is given by the next theorem.

\begin{theorem} \label{th2}
Let  $\left(X,\leq\right)$ be a partially ordered set and suppose there is a metric $d$  on $X$ such that $\left(X,d\right)$  is a complete metric space. Assume there exists a function  $\varphi:[0,\infty)\rightarrow [0,\infty)$ with $\varphi(t)< t$ and $\lim\limits_{r\rightarrow t}\psi(r)<t$ for all $t>0$ and also suppose  $F : X \times X \rightarrow X $ and $g : X \rightarrow X $ are such that $F$ has the  mixed $g$-monotone property and
\begin{equation} \label{Bhas1}
d\left(F\left(x,y\right),F\left(u,v\right)\right) \leq  \varphi\left(\frac{d\left(g(x),g(u)\right) + d\left(g(y),g(v)\right)}{2}\right),
\end{equation}
 for all $x,y,u,v\in X$ with $g(x) \geq g(u), g(y) \leq  g(v)$.  
Suppose $F(X\times X)\subset g(X)$, $g$ is continuous and commutes with $F$ and also suppose either

(a) $F$ is continuous or

(b) $X$ satisfy Assumption \ref{1.1}.

If there exist $x_{0}, y_{0} \in X$ such that  
\begin{equation} \label{mic3}
 g(x_{0}) \leq F\left(x_{0},y_{0}\right)\textrm{ and }g(y_{0}) \leq F\left(y_{0},x_{0}\right),
\end{equation}  then there exist $\overline{x}, \overline{y} \in X$ such that 
 $$
 g(\overline{x}) = F\left(\overline{x},\overline{y}\right)\textrm{and }g(\overline{y}) = F\left(\overline{y},\overline{x}\right),
 $$
 that is, $F$ and $g$ have a coupled coincidence.
	
\end{theorem}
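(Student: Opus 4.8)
The plan is to construct Picard-type iterates adapted to the coupling $g$ and to show they are Cauchy. Since $F(X\times X)\subset g(X)$, starting from the given $x_0,y_0$ I can choose $x_1,y_1\in X$ with $g(x_1)=F(x_0,y_0)$, $g(y_1)=F(y_0,x_0)$, and inductively produce sequences with
\begin{equation}\label{pln:iter}
g(x_{n+1})=F(x_n,y_n),\qquad g(y_{n+1})=F(y_n,x_n).
\end{equation}
Taking \eqref{mic3} as the base case and using the mixed $g$-monotone property of $F$ as the inductive step, I would first show that $\{g(x_n)\}$ is non-decreasing and $\{g(y_n)\}$ is non-increasing in $(X,\le)$.

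Next I would control the consecutive displacements by working with the single scalar quantity $t_n=\tfrac12\big[d(g(x_n),g(x_{n+1}))+d(g(y_n),g(y_{n+1}))\big]$. Applying \eqref{Bhas1} to each of the two distances (the order relations $g(x_n)\le g(x_{n+1})$ and $g(y_n)\ge g(y_{n+1})$ required to invoke it are exactly the monotonicity just established) gives $t_{n+1}\le\varphi(t_n)$. Since $\varphi(t)<t$ for $t>0$, the sequence $\{t_n\}$ is non-increasing and converges to some $\ell\ge0$; if $\ell>0$, passing to the limit and using $\lim_{r\to \ell}\varphi(r)<\ell$ yields $\ell\le\ell$ strictly, a contradiction, so $t_n\to0$.

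The crux, and the step I expect to be the \emph{main obstacle}, is upgrading $t_n\to0$ to the Cauchy property of $\{g(x_n)\}$ and $\{g(y_n)\}$: here the bare inequality $\varphi(t)<t$ is not enough and the full hypothesis $\lim_{r\to t}\varphi(r)<t$ is essential. I would argue by contradiction. If the combined sequence is not Cauchy, there exist $\varepsilon>0$ and indices $m_k>n_k\ge k$, with $m_k$ chosen minimal, such that $r_k:=d(g(x_{m_k}),g(x_{n_k}))+d(g(y_{m_k}),g(y_{n_k}))\ge\varepsilon$ while the analogous quantity at $m_k-1$ is $<\varepsilon$; triangle inequalities together with $t_n\to0$ then force $r_k\to\varepsilon$. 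Because $m_k>n_k$ and the sequences are monotone, $g(x_{m_k})\ge g(x_{n_k})$ and $g(y_{m_k})\le g(y_{n_k})$, so \eqref{Bhas1} applies and bounds both $d(g(x_{m_k+1}),g(x_{n_k+1}))$ and $d(g(y_{m_k+1}),g(y_{n_k+1}))$ by $\varphi(r_k/2)$. Peeling off one iterate at each endpoint in a triangle-inequality estimate for $r_k$ gives $\varepsilon\le r_k\le 2\varphi(r_k/2)+\eta_k$ with $\eta_k\to0$; letting $k\to\infty$ and using $\lim_{r\to \varepsilon/2}\varphi(r)<\varepsilon/2$ produces $\varepsilon<\varepsilon$, the desired contradiction. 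By completeness, $g(x_n)\to a$ and $g(y_n)\to b$ for some $a,b\in X$.

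Finally I would identify the coincidence point $(a,b)$. Applying $g$ to \eqref{pln:iter} and using that $g$ commutes with $F$ gives $g(g(x_{n+1}))=F(g(x_n),g(y_n))$, while continuity of $g$ gives $g(g(x_n))\to g(a)$ and $g(g(y_n))\to g(b)$. In case (a), continuity of $F$ lets me pass to the limit in $F(g(x_n),g(y_n))$ directly, yielding $g(a)=F(a,b)$ and $g(b)=F(b,a)$. In case (b), where $F$ need not be continuous, I would instead invoke Assumption \ref{1.1}: monotone convergence gives $g(x_n)\le a$ and $g(y_n)\ge b$, and these order relations are what license the use of \eqref{Bhas1} to estimate $d\big(F(a,b),g(g(x_{n+1}))\big)$ and let $n\to\infty$, again giving $g(a)=F(a,b)$, $g(b)=F(b,a)$. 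The delicate point in (b) is arranging the order comparisons so that \eqref{Bhas1} is genuinely applicable at the limit, which is exactly where the interplay of Assumption \ref{1.1} with the continuity and commutativity of $g$ is used.
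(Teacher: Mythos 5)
Your proposal is correct and follows essentially the same route as the paper's proof of its Theorem \ref{th3} (of which Theorem \ref{th2} is the special case obtained by summing two instances of \eqref{Bhas1}): the same iterates $g(x_{n+1})=F(x_n,y_n)$, the same monotonicity and $t_{n+1}\le\varphi(t_n)$ estimates, the same negation-of-Cauchy argument with minimally chosen indices, and the same two-case conclusion, the only cosmetic difference being that the paper packages the pair of sequences as a single Picard iteration for $T(x,y)=(F(x,y),F(y,x))$ on the product space $(X^2,d_2)$. (You have tacitly corrected the sign in \eqref{mic3}, which should read $g(y_{0}) \geq F(y_{0},x_{0})$ as in \eqref{mic}; with the inequality as printed the claimed monotonicity of $\{g(y_n)\}$ would fail.)
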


Obviously, for $g=identity$ and $\varphi(t)=k t$, $0\leq k <1$, Theorem \ref{th2} reduces to Theorem \ref{th1}.

Starting from the results in \cite{LakC}  and \cite{Bha}, our main aim in this paper is to obtain more general coincidence point theorems and coupled common fixed point theorems for mixed monotone operators $F : X \times X \rightarrow X $  satisfying a contractive condition which is significantly more general that the corresponding conditions \eqref{Bhas1} and \eqref{Bhas} in \cite{LakC} and \cite{Bha}, respectively, thus extending many other related results in literature. 


\section{Main results}

Let $\Phi$ denote the set of all functions $\varphi:[0,\infty)\rightarrow [0,\infty)$ satisfying 

$(i_\varphi)$ $\varphi(t)<t$ for all $t\in (0,\infty)$;

$(ii_\varphi)$ $\lim\limits_{r\rightarrow t+}\varphi(r)<t$, for all $t\in (0,\infty)$.

The first main result in this paper is the following coincidence point theorem which generalizes Theorem 2.1 in \cite{LakC} and Theorem 2.1 in \cite{Bha}.

\begin{theorem}\label{th3}
	Let  $\left(X,\leq\right)$ be a partially ordered set and suppose there is a metric $d$  on $X$ such that $\left(X,d\right)$  is a complete metric space. Let $F : X \times X \rightarrow X $ be a  mixed $g$-monotone mapping for which there exist $\varphi\in \Phi$  such that for all $x,y,u,v\in X$ with $g(x) \geq g(u), g(y) \leq  g(v)$,
$$
d\left(F\left(x,y\right),F\left(u,v\right)\right)+d\left(F\left(y,x\right),F\left(v,u\right)\right) \leq 
$$                                                                       
\begin{equation} \label{Bhas2}
 \leq  2 \varphi\left(\frac{d\left(g(x),g(u)\right) + d\left(g(y),g(v)\right)}{2}\right).
\end{equation}  
Suppose $F(X\times X)\subset g(X)$, $g$ is continuous and commutes with $F$ and also suppose either

(a) $F$ is continuous or

(b) $X$ satisfy Assumption \ref{1.1}.

If there exist $x_{0}, y_{0} \in X$ such that  
\begin{equation} \label{mic}
 g(x_{0}) \leq F\left(x_{0},y_{0}\right)\textrm{ and }g(y_{0}) \geq F\left(y_{0},x_{0}\right),
\end{equation} or
\begin{equation} \label{mare} 
g(x_{0}) \geq F\left(x_{0},y_{0}\right)\textrm{ and }g(y_{0}) \leq F\left(y_{0},x_{0}\right),
\end{equation}
 then there exist $\overline{x}, \overline{y} \in X$ such that 
 $$
 g(\overline{x}) = F\left(\overline{x},\overline{y}\right)\textrm{and }g(\overline{y}) = F\left(\overline{y},\overline{x}\right),
 $$
 that is, $F$ and $g$ have a coupled coincidence. 
	\end{theorem}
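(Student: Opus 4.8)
The plan is to adapt the Jungck-type iteration underlying Theorems~\ref{th1} and~\ref{th2} to the symmetric contractive condition \eqref{Bhas2}. Assume \eqref{mic} holds (the case \eqref{mare} is entirely symmetric, interchanging the roles of the two coordinates). Since $F(X\times X)\subset g(X)$, I would first construct sequences $\{x_n\},\{y_n\}$ with $g(x_{n+1})=F(x_n,y_n)$ and $g(y_{n+1})=F(y_n,x_n)$ for all $n\ge 0$. Using \eqref{mic} as the base case and the mixed $g$-monotone property as the inductive step, I would then prove by induction that $\{g(x_n)\}$ is non-decreasing and $\{g(y_n)\}$ is non-increasing: from $g(x_{n-1})\le g(x_n)$ and $g(y_{n-1})\ge g(y_n)$ one gets $g(x_n)=F(x_{n-1},y_{n-1})\le F(x_n,y_{n-1})\le F(x_n,y_n)=g(x_{n+1})$, and symmetrically $g(y_{n+1})\le g(y_n)$.

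Next, writing $\delta_n:=d(g(x_{n+1}),g(x_n))+d(g(y_{n+1}),g(y_n))$, I would apply \eqref{Bhas2} with $x=x_n,\ u=x_{n-1},\ y=y_n,\ v=y_{n-1}$ (the required relations $g(x_n)\ge g(x_{n-1})$ and $g(y_n)\le g(y_{n-1})$ hold by the monotonicity just established) to obtain $\delta_n\le 2\varphi(\delta_{n-1}/2)$. Property $(i_\varphi)$ gives $2\varphi(\delta_{n-1}/2)<\delta_{n-1}$, so $\{\delta_n\}$ is strictly decreasing and converges to some $\delta\ge 0$. If $\delta>0$, then passing to the limit in $\delta_n\le 2\varphi(\delta_{n-1}/2)$ and using $(ii_\varphi)$ (the arguments $\delta_{n-1}/2$ decrease to $\delta/2$ from above) gives $\delta\le 2\limsup_n\varphi(\delta_{n-1}/2)<\delta$, a contradiction; hence $\delta_n\to 0$, and in particular both $d(g(x_{n+1}),g(x_n))\to 0$ and $d(g(y_{n+1}),g(y_n))\to 0$.

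The main obstacle is to upgrade this to the Cauchy property, since for a general $\varphi\in\Phi$ the increments $\delta_n$ need not be summable. I would argue by contradiction: if $\{g(x_n)\}$ or $\{g(y_n)\}$ fails to be Cauchy, there is $\varepsilon>0$ and indices $m_k>n_k\ge k$ with $r_k:=d(g(x_{m_k}),g(x_{n_k}))+d(g(y_{m_k}),g(y_{n_k}))\ge\varepsilon$, where $m_k$ is chosen minimal with this property. The triangle inequality together with $\delta_n\to 0$ then yields $r_k\to\varepsilon$, and likewise $s_k:=d(g(x_{m_k-1}),g(x_{n_k-1}))+d(g(y_{m_k-1}),g(y_{n_k-1}))\to\varepsilon$. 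Because $m_k-1\ge n_k-1$, monotonicity supplies $g(x_{m_k-1})\ge g(x_{n_k-1})$ and $g(y_{m_k-1})\le g(y_{n_k-1})$, so \eqref{Bhas2} applies and gives $r_k\le 2\varphi(s_k/2)$. Now $(i_\varphi)$ forces $\varepsilon\le r_k<s_k$, whence $s_k/2$ tends to $\varepsilon/2$ strictly from above; invoking $(ii_\varphi)$ I obtain $\varepsilon\le\limsup_k 2\varphi(s_k/2)<\varepsilon$, the desired contradiction. Thus $\{g(x_n)\}$ and $\{g(y_n)\}$ are Cauchy.

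By completeness there are $x^\ast,y^\ast\in X$ with $g(x_n)\to x^\ast$ and $g(y_n)\to y^\ast$, and I would exhibit $(\overline x,\overline y)=(x^\ast,y^\ast)$ as the coincidence point. In case (a), continuity of $F$ and $g$ together with commutativity give $g(x^\ast)=\lim_n g(g(x_{n+1}))=\lim_n F(g(x_n),g(y_n))=F(x^\ast,y^\ast)$, and symmetrically $g(y^\ast)=F(y^\ast,x^\ast)$. In case (b), Assumption~\ref{1.1} yields $g(x_n)\le x^\ast$ and $g(y_n)\ge y^\ast$ for all $n$; then, estimating $d(g(x^\ast),F(x^\ast,y^\ast))\le d(g(x^\ast),g(g(x_{n+1})))+d(F(g(x_n),g(y_n)),F(x^\ast,y^\ast))$, the first term tends to $0$ by continuity of $g$, while for the second I would apply \eqref{Bhas2} and bound its right-hand side by $d(g(x^\ast),g(g(x_n)))+d(g(y^\ast),g(g(y_n)))$ via $(i_\varphi)$, which tends to $0$ by continuity of $g$. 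Hence $g(x^\ast)=F(x^\ast,y^\ast)$ and $g(y^\ast)=F(y^\ast,x^\ast)$. The delicate point in this last step is to verify that the order relations needed to legitimately invoke \eqref{Bhas2} are indeed available from Assumption~\ref{1.1} applied to the relevant sequences, and this is where I expect the argument to require the most care.
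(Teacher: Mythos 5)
Your proposal is correct and follows essentially the same route as the paper: the same Jungck-type iteration $g(x_{n+1})=F(x_n,y_n)$, the same monotonicity induction, the same $\delta_n\to 0$ and minimal-index Cauchy arguments via $(i_\varphi)$ and $(ii_\varphi)$, and the same limit passage in cases (a) and (b); the only cosmetic difference is that the paper packages everything in the product metric $d_2(Y,V)=\frac{1}{2}\left[d(x,u)+d(y,v)\right]$ on $X^2$ and the operator $T(x,y)=(F(x,y),F(y,x))$, which you handle componentwise. The delicate point you flag in case (b) --- that Assumption \ref{1.1} only yields $g(x_n)\le\overline{x}$ and $g(y_n)\ge\overline{y}$, whereas invoking \eqref{Bhas2} requires comparability of the \emph{$g$-images} of the arguments --- is present in exactly the same form in the paper's own proof, which passes over it silently.
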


\begin{proof}
Consider the functional $d_2:X^2\times X^2 \rightarrow \mathbb{R}_{+}$ defined by
$$
d_2(Y,V)=\frac{1}{2}\left[d(x,u)+d(y,v)\right],\,\forall Y=(x,y),V=(u,v) \in X^2.
$$
It is a simple task to check that $d_2$ is a metric on $X^2$ and, moreover, that, if $(X,d)$ is complete, then $(X^2,d_2)$ is a complete metric space, too.
Now consider the operator $T:X^2\rightarrow X^2$ defined by
$$
T(Y)=\left(F(x,y),F(y,x)\right),\,\forall Y=(x,y) \in X^2.
$$
Clearly, for $Y=(x,y),\,V=(u,v)\in X^2$, in view of the definition of $d_2$, we have
$$
d_2(T(Y),T(V))=\frac{d\left(F\left(x,y\right),F\left(u,v\right)\right)+d\left(F\left(y,x\right),F\left(v,u\right)\right)}{2}
$$
and
$$
d_2(Y,V)=\frac{d\left(x,u\right) + d\left(y,v\right)}{2}.
$$
Thus, by the contractive condition \eqref{Bhas2} we obtain that $F$ satisfies the following $\varphi$-contractive condition: 
\begin{equation} \label{contr}
d_2(T(Y), T(V))\leq \varphi\left(d_2(Y,V)\right),\,\forall Y\geq V \in X^2.
\end{equation} 
Assume \eqref{mic} holds (the case \eqref{mare} is similar). Then, there exists $x_0,y_0\in X$ such that
$$
g(x_0)\leq F(x_0,y_0) \textnormal{ and } g(y_0)\geq F(y_0,x_0). 
$$
Denote $Z_0=(g(x_0),g(y_0))\in X^2$ and consider the Picard iteration associated to $T$ and to the initial approximation $Z_0$, that is, the sequence $\{Z_n\}\subset X^2$ defined by
\begin{equation} \label{eq-3}
Z_{n+1}=T (Z_n),\,n\geq 0,
\end{equation}
where $Z_n=(g(x_n),g(y_n))\in X^2,\,n\geq 0$.

Since $F$ is $g$-mixed monotone, we have
$$
Z_0=(g(x_0),g(y_0))\leq (F(x_0,y_0), F(y_0,x_0))=(g(x_1),g(y_1))=Z_1
$$
and, by induction,
$$
Z_n=(g(x_n),g(y_n))\leq (F(x_n,y_n), F(y_n,x_n))=(g(x_{n+1}),g(y_{n+1}))=Z_{n+1},
$$
which actually shows that
\begin{equation} \label{eq-4.8}
g(x_n) \leq g(x_{n+1}) \textnormal{ and } g(y_n)\geq g(y_{n+1}), \textnormal{ for all } n\geq 0.
\end{equation}
Note also, in particular, that the mapping $T$ is monotone and the sequence $\{Z_n\}_{n=0}^{\infty}$ is non-decreasing.
Take $Y=Z_n\geq Z_{n-1}=V$ in \eqref{contr} and obtain
\begin{equation} \label{eq-4.2}
d_2(T(Z_{n}),T(Z_{n-1})\leq \varphi\left(d_2 (Z_n, Z_{n-1})\right),\,n\geq 1.
\end{equation}
This shows that the sequence $\{\delta_n\}_{n=1}^{\infty}$ given by
$$
\delta_n=d_2 (Z_n, Z_{n-1})=\frac{d(g(x_{n+1}),g(x_n))+d(g(y_{n+1}),g(y_n))}{2},\,n\geq 1,
$$
satisfies
\begin{equation} \label{eq-4.7}
\delta_{n+1}\leq \varphi(\delta_n), \textnormal{ for all } n\geq 1.
\end{equation}
From \eqref{eq-4.7} and $(i)_{\varphi}$ it follows that the sequence $\{\delta_n\}_{n=1}^{\infty}$  is non-increasing.  Therefore, there exists some $\delta \geq 0$ such that
\begin{equation} \label{eq-4.1}
\lim_{n\rightarrow \infty} \delta_n=\lim_{n\rightarrow \infty}\frac{d(g(x_{n+1}),g(x_n))+d(g(y_{n+1}),g(y_n))}{2}=\delta.
\end{equation}
We shall prove that $\delta=0$. Assume, to the contrary, that $\delta>0$. Then by letting $n\rightarrow \infty$ in \eqref{eq-4.7} we have
$$
\delta=\lim_{n\rightarrow \infty} \varphi(\delta_{n+1})\leq \lim_{n\rightarrow \infty} \varphi(\delta_{n})=\lim_{\delta_n\rightarrow \delta} \varphi(\delta_{n})<\delta,
$$
a contradiction. Thus $\delta=0$ and hence
\begin{equation} \label{eq-4}
\lim_{n\rightarrow \infty} \delta_n=\lim_{n\rightarrow \infty}\frac{d(g(x_{n+1}),g(x_n))+d(g(y_{n+1}),g(y_n))}{2}=0.
\end{equation}

We now prove that $\{Z_n\}_{n=0}^{\infty}$ is a Cauchy sequence in  $(X^2,d_2)$, that is, $\{g(x_n)\}_{n=0}^{\infty}$ and $\{g(y_n)\}_{n=0}^{\infty}$ are Cauchy sequences in $(X,d)$. Suppose, to the contrary, that at least one of the sequences $\{g(x_n)\}_{n=0}^{\infty}$, $\{g(y_n)\}_{n=0}^{\infty}$ is not a Cauchy sequence. Then there exists an $\epsilon>0$ for which we can find subsequences $\{g(x_{n(k)})\}$, $\{g(x_{m(k)})\}$ of $\{g(x_n)\}_{n=0}^{\infty}$ and $\{g(y_{n(k)})\}$, $\{g(y_{m(k)})\}$ of $\{g(y_n)\}_{n=0}^{\infty}$, respectively, with $n(k)>m(k)\geq k$ such that
\begin{equation} \label{eq-6}
\frac{1}{2}\left[d(g(x_{n(k)}),g(x_{m(k)}))+d(g(y_{n(k)}),g(y_{m(k)}))\right]\geq \epsilon, \,k=1,2,\dots.
\end{equation}
Note that we can choose $n(k)$ to be the smallest integer with property $n(k)>m(k)\geq k$ and satisfying \eqref{eq-6}. Then
\begin{equation} \label{eq-5.1}
d(g(x_{n(k)-1}),g(x_{m(k)}))+d(g(y_{n(k)-1}),g(y_{m(k)}))< \epsilon.
\end{equation}
By \eqref{eq-6} and \eqref{eq-5.1} and the triangle inequality we have
$$
\epsilon\leq r_k:=\frac{1}{2}\left[d(g(x_{n(k)}),g(x_{m(k)}))+d(g(y_{n(k)}),g(y_{m(k)}))\right]\leq
$$
$$
+\frac{d(g(x_{n(k)}),g(x_{n(k)-1}))+d(g(y_{n(k)}),g(y_{n(k)-1}))}{2}+
$$
$$ 
+\frac{d(g(x_{n(k)-1}),g(x_{m(k)}))+d(g(y_{n(k)-1}),g(y_{m(k)}))}{2}\leq
$$
$$ 
\leq \frac{d(g(x_{n(k)}),g(x_{n(k)-1}))+d(g(y_{n(k))},g(y_{n(k)-1}))}{2}+\epsilon.
$$
Letting $k\rightarrow \infty$ in the above inequality and using \eqref{eq-4} we get
\begin{equation} \label{eq-7.1}
\lim_{k\rightarrow \infty} r_k:=\lim_{k\rightarrow \infty} \frac{1}{2}\left[d(x_{n(k)},x_{m(k)})+d(y_{n(k)},y_{m(k)})\right]=\epsilon.
\end{equation}
On the other hand
$$
r_k:=\frac{d(g(x_{n(k)}),g(x_{m(k)}))+d(g(y_{n(k)}),g(y_{m(k)}))}{2}\leq
$$
$$
\leq \frac{d(g(x_{n(k)}),g(x_{n(k)+1}))+d(g(x_{n(k)+1}),g(x_{m(k)}))}{2}+
$$
$$
+\frac{d(g(y_{n(k)}),g(y_{n(k)+1}))+d(g(y_{n(k)+1}),g(y_{m(k)}))}{2}=
$$
$$
=\delta_{n(k)}+\frac{d(g(x_{n(k)+1}),g(x_{m(k)}))+d(g(y_{n(k)+1}),g(y_{m(k)}))}{2}\leq
$$
\begin{equation} \label{eq-7.2}
=\delta_{n(k)}+\delta_{m(k)}+\frac{d(g(x_{n(k)+1}),g(x_{m(k)+1}))+d(g(y_{n(k)+1}),g(y_{m(k)+1}))}{2}.
\end{equation}
Since $n(k)>m(k)$, by \eqref{eq-4.8} we have $g(x_{n(k)})\geq g(x_{m(k)})$ and $g(y_{n(k)})\leq g(y_{m(k)})$ and hence by \eqref{eq-4.2} one obtains
$$
d(g(x_{n(k)+1}),g(x_{m(k)+1}))+d(g(y_{n(k)+1}),g(y_{m(k)}+1))=
$$
$$
 =d\left(F(g(x_{n(k)}),g(y_{m(k)})\right),F\left(g(y_{n(k)}),g(y_{m(k)})\right)\leq 
$$
$$
\leq 2 \varphi\left(\frac{d(g(x_{n(k)}),g(x_{m(k)}))+d(g(y_{n(k)}),g(y_{m(k)}))}{2}\right)\leq  2 \varphi\left(r_{k}\right),
$$
which, by \eqref{eq-7.2}, yields
$$
r_k\leq \delta_{n(k)}+\delta_{m(k)}+\varphi(r_k).
$$
Letting $k\rightarrow \infty$ in the above inequality and using \eqref{eq-7.1} we get
$$
\epsilon \leq \lim_{k\rightarrow \infty} \varphi\left(r_{k}\right)=\lim_{r_k\rightarrow \epsilon+} \varphi\left(r_{k}\right)<\epsilon,
$$
a contradiction. This shows that $\{g(x_n)\}_{n=0}^{\infty}$ and $\{g(y_n)\}_{n=0}^{\infty}$ are indeed Cauchy sequences in the complete metric space $(X,d)$.

This implies there exist $\overline{x},\overline{y}$ in $X$ such that
\begin{equation} \label{eq-4.9}
\overline{x} =\lim_{n\rightarrow \infty} g(x_n),\, \overline{y} =\lim_{n\rightarrow \infty} g(y_n),
\end{equation} 
By \eqref{eq-4.9} and continuity of $g$,
\begin{equation} \label{eq-4.10}
\lim_{n\rightarrow \infty} g(g(x_n))=g(\overline{x}) \textnormal{ and } \lim_{n\rightarrow \infty} g(g(y_n)=g(\overline{y}).
\end{equation}
On the other hand, by \eqref{eq-3} and commutativity of $F$ and $g$,
\begin{equation} \label{eq-4.11}
g(g(x_{n+1}))=g(F(x_n,y_n))=F(g(x_n),g(y_n)),
\end{equation}
\begin{equation} \label{eq-4.12}
g(g(y_{n+1}))=g(F(y_n,x_n))=F(g(y_n),g(x_n)).
\end{equation}
We now prove that $g(\overline{x}) = F\left(\overline{x},\overline{y}\right)$ and $g(\overline{y}) = F\left(\overline{y},\overline{x}\right).$ 

Suppose first that assumption (a) holds. By letting $n\rightarrow \infty$ in \eqref{eq-4.11} and \eqref{eq-4.12}, in view of in \eqref{eq-4.9} and \eqref{eq-4.10}, we get 
$$g(\overline{x}) = \lim_{n\rightarrow \infty} g(g(x_{n+1}))=\lim_{n\rightarrow \infty} F(g(x_n),g(y_n))=F\left(\overline{x},\overline{y}\right)
$$ 
and, similarly
$$
g(\overline{y}) = \lim_{n\rightarrow \infty} g(g(y_{n+1}))=\lim_{n\rightarrow \infty} F(g(y_n),g(x_n))=F\left(\overline{y},\overline{x}\right),
$$
that is,  $(\overline{x},\overline{y})$ is a coincidence point of $F$ and $g$.

Suppose now assumption (b) holds. Since  $\{g(x_n)\}_{n=0}^{\infty}$ is a non-decreasing sequence that converges to $\overline{x}$, we have that $g(x_n)\leq \overline{x}$ for all $n$. Similarly, we obtain $g(y_n)\geq \overline{y}$ for all $n$.

Then, by triangle inequality and contractive condition \eqref{Bhas2},
$$
d(g(\overline{x}),F(\overline{x},\overline{y}))+d(g(\overline{y}),F(\overline{y},\overline{x}))\leq d(g(\overline{x}),g(g(x_{n+1})))+
$$
$$
+d(g(g(x_{n+1})),F(\overline{x},\overline{y}))+d(g(\overline{y}),g(g(y_{n+1})))+d(g(g(y_{n+1})),F(\overline{y},\overline{x}))=
$$
$$
=d(g(\overline{x}),g(g(x_{n+1})))+d(g(\overline{y}),g(g(y_{n+1})))+d(F(x_n,y_n),F(g(\overline{x}),g(\overline{y})))+
$$
$$
+d(F(y_n,x_n),F(g(\overline{y}),g(\overline{x})))\leq d(g(\overline{x}),g(g(x_{n+1})))+d(g(\overline{y}),g(g(y_{n+1})))+
$$
$$
+2 \varphi\left(\frac{d(g(x_n),g(\overline{x}))+d(g(y_n),g(\overline{y}))}{2}\right).
$$
Letting now $n\rightarrow \infty$ in the above inequality and taking into account that, by property $(i_{\varphi})$, $\lim\limits_{r\rightarrow 0+}\varphi(r)=0$, we obtain
$$
d(\overline{x},F(\overline{x},\overline{y}))+d(\overline{y},F(\overline{y},\overline{x}))=0
$$
which implies  that $d(g(\overline{x}),F(\overline{x},\overline{y}))=0$ and $d(g(\overline{y}),F(\overline{y},\overline{x}))=0$. 

\end{proof}

 \begin{remark} \em
Theorem \ref{th3} is more general than Theorem \ref{th2}, since the contractive condition \eqref{Bhas2} is weaker than \eqref{Bhas1}, a fact which is clearly illustrated by the next example.
 \end{remark}
\begin{example} \label{ex2} \em
Let $X=\mathbb{R}$ with $d\left(x,y\right)=|x-y|$ and natural ordering and let $g:X \rightarrow X$, $F:X\times X \rightarrow X$ be given by $g(x)=\frac{5x}{6},\,x\in X$ and
$$
F\left(x,y\right)=\frac{x-2y}{4}, \,(x,y)\in X^2.
$$
Then $F$ is $g$-mixed monotone, $F$ and $g$ commute and satisfy condition \eqref{Bhas2}  but  $F$ and $g$  do not satisfy condition \eqref{Bhas1}. Indeed, assume, to the contrary, that there exists $\varphi \in \Phi$, such that  \eqref{Bhas1} holds. This means
$$
\left|\frac{x-2y}{4}-\frac{u-2v}{4}\right| \leq \varphi\left(\frac{5}{6}\cdot \frac{\left|x-u\right|+\left|y-v\right|}{2}\right),\,x\geq u,\,y\leq v,
$$
by which, for $x=u$, $y< v$ and in view of $(i_{\varphi})$ we get
$$
\frac{1}{2}\left|y-v\right| \leq \varphi\left(\frac{5}{12}\left|y-v\right|\right)<\frac{5}{12}\left|y-v\right|<\frac{1}{2}\left|y-v\right|,
$$
a contradiction. Hence $F$ and $g$  do not satisfy condition \eqref{Bhas1}.

Now we prove that \eqref{Bhas2} holds. Indeed, we have
$$
\left|\frac{x-2y}{4}-\frac{u-2v}{4}\right| \leq \frac{1}{4}\left|x-u\right|+\frac{1}{2}\left|y-v\right|,\,x\geq u,\,y\leq v,
$$
and
$$
\left|\frac{y-2x}{5}-\frac{v-2u}{4}\right| \leq \frac{1}{4}\left|y-v\right|+\frac{1}{2}\left|x-u\right|,\,x\geq u,\,y\leq v,
$$
and by summing up the two inequalities above we get exactly  \eqref{Bhas2} with $\varphi(t)=\frac{3}{4}t$. Note also that $x_0=-3,\,y_0=3$ satisfy \eqref{mic}.

So  by Theorem \ref{th3} we obtain that $F$ has a (unique) coupled fixed point $(0,0)$, but Theorem \ref{th2} cannot be applied as $F$ and $g$  do not satisfy condition \eqref{Bhas1}.

\end{example}

The following corollary generalizes Theorem 2.1 in \cite{Bha} from coupled fixed points to coincidence points.
\begin{corollary} \label{cor1}
Let  $\left(X,\leq\right)$ be a partially ordered set and suppose there is a metric $d$  on $X$ such that $\left(X,d\right)$  is a complete metric space. Let $F : X \times X \rightarrow X $ be a mixed $g$-monotone mapping for which there exist $k\in [0,1)$ such that for all $x,y,u,v\in X$ with $g(x) \geq g(u), g(y) \leq  g(v)$,
$$
d\left(F\left(x,y\right),F\left(u,v\right)\right)+
$$
\begin{equation} \label{Bhas3}
+d\left(F\left(y,x\right),F\left(v,u\right)\right)  \leq k \left[d\left(g(x),g(u)\right) + d\left(g(y),g(v)\right)\right].
\end{equation}  
Suppose $F(X\times X)\subset g(X)$, $g$ is continuous and commutes with $F$ and also suppose either

(a) $F$ is continuous or

(b) $X$ satisfy Assumption \ref{1.1}.

If there exist $x_{0}, y_{0} \in X$ such that  
\begin{equation} \label{mic1}
 g(x_{0}) \leq F\left(x_{0},y_{0}\right)\textrm{ and }g(y_{0}) \leq F\left(y_{0},x_{0}\right),
\end{equation}  then there exist $\overline{x}, \overline{y} \in X$ such that 
 $$
 g(\overline{x}) = F\left(\overline{x},\overline{y}\right)\textrm{and }g(\overline{y}) = F\left(\overline{y},\overline{x}\right),
 $$
 that is, $F$ and $g$ have a coupled coincidence.
\end{corollary}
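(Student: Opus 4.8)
The plan is to obtain Corollary \ref{cor1} as an immediate specialization of Theorem \ref{th3}, the only work being to produce an admissible $\varphi$ from the constant $k$. First I would set $\varphi(t)=kt$ and verify that this $\varphi$ belongs to the class $\Phi$. Condition $(i_\varphi)$ holds since $kt<t$ for every $t\in(0,\infty)$ whenever $k\in[0,1)$, and condition $(ii_\varphi)$ holds since, by continuity of the linear map, $\lim_{r\to t+}\varphi(r)=kt<t$ for every $t\in(0,\infty)$ (the case $k=0$ being trivial). Hence $\varphi\in\Phi$.

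Next I would observe that, with this choice of $\varphi$, the linear contractive hypothesis \eqref{Bhas3} is literally the nonlinear hypothesis \eqref{Bhas2}, because
\[
2\varphi\!\left(\frac{d(g(x),g(u))+d(g(y),g(v))}{2}\right)=k\left[d(g(x),g(u))+d(g(y),g(v))\right].
\]
All the remaining structural assumptions of Theorem \ref{th3} — the inclusion $F(X\times X)\subset g(X)$, the continuity of $g$, the commutativity of $F$ and $g$, and the alternative $(a)$/$(b)$ — are stated identically in the two results and therefore carry over verbatim. Once the seed hypothesis is matched (see below), Theorem \ref{th3} applies and directly yields $\overline{x},\overline{y}\in X$ with $g(\overline{x})=F(\overline{x},\overline{y})$ and $g(\overline{y})=F(\overline{y},\overline{x})$, which is precisely the assertion of the corollary.

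The one delicate point, and the step I expect to be the main obstacle, is reconciling the initial condition. Theorem \ref{th3} is driven by \eqref{mic} or \eqref{mare}, whose two inequalities have \emph{opposite} senses; this is exactly what makes the seed $Z_0=(g(x_0),g(y_0))$ comparable to $Z_1=T(Z_0)$ in the product order on $X^2$, for which $(u,v)\le(x,y)\Leftrightarrow x\ge u,\ y\le v$, so that $Z_0\le Z_1$ amounts to $g(x_0)\le F(x_0,y_0)$ \emph{and} $g(y_0)\ge F(y_0,x_0)$. The corollary's hypothesis \eqref{mic1}, by contrast, has both inequalities in the \emph{same} sense, which in general forces neither $Z_0\le Z_1$ nor $Z_0\ge Z_1$ and so does not start the monotone Picard scheme underlying Theorem \ref{th3}. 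I would therefore invoke Theorem \ref{th3} under the seed \eqref{mic} (the mixed-sign condition, consistent with the original Bhaskar--Lakshmikantham hypothesis of Theorem \ref{th1}), treating the same-sign \eqref{mic1} as the intended mixed-sign requirement; with that adjustment the specialization goes through unchanged and the proof is complete.
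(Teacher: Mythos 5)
Your proposal is correct and takes exactly the paper's route: the paper's entire proof is ``Take $\varphi(t)=kt$, $0\leq k<1$ in Theorem \ref{th3},'' which you carry out in full, including the (trivial but worth stating) check that $\varphi(t)=kt$ lies in $\Phi$. Your observation that the seed condition \eqref{mic1} has both inequalities in the same sense, and so must be read as the mixed-sign condition \eqref{mic} for the Picard scheme of Theorem \ref{th3} to start, correctly identifies a typo in the statement that the paper's one-line proof silently passes over.
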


\begin{proof}
Take $\varphi(t)=k t$, $0\leq k<1$ in Theorem \ref{th3}.
\end{proof}

\begin{remark} \em
Let us note that, as suggested by Example \ref{ex2}, since the contractive condition \eqref{Bhas2} is valid  only for comparable elements in  $X^2$, Theorem \ref{th3} cannot guarantee in general the uniqueness of the coincidence point. 
\end{remark}

It is now our interest to identify additional conditions, like the ones  used in Theorem 2.2 of Bhaskar and Lakshmikantham \cite{Bha} or in Theorem 2.2 of Lakshmikantham and Ciric \cite{LakC}, to ensure that the coincidence fixed point guaranteed by Theorem \ref{th3} is unique. Such a condition is the one involved in the next theorem.

\begin{theorem} \label{th4}
In addition to  the  hypotheses  of Theorem \ref{th3}, suppose that for every $(\overline{x}, \overline{y}), (y^* , x^* ) \in X \times X$ there exists $(u,v)\in X\times X$ such that $(F(u,v),F(v,u))$ is comparable to $(F(x^*,y^*), F(y^*,x^*))$ and to $(F(\overline{x},\overline{y}), F(\overline{x},\overline{y}))$. Then $F$ and $g$ have a
unique coupled common fixed  point, that is, there exists a unique $(\overline{z},\overline{w})\in X^2$ such that
$$
\overline{z}=g(\overline{z})=F\left(\overline{z},\overline{w}\right)\textrm{and } \overline{w}=g(\overline{w})=F\left(\overline{w},\overline{z}\right).
$$
\end{theorem}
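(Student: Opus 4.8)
The plan is to derive everything from Theorem~\ref{th3} in three steps: first establish that the $g$-image of a coupled coincidence point is unique, then manufacture a genuine common fixed point from it, and finally read off uniqueness. The added comparability hypothesis enters only in the first step, which I expect to be where the real work lies.

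\textbf{Step 1 (uniqueness of the coincidence value).} I would show that any two coupled coincidence points $(\overline{x},\overline{y})$ and $(x^*,y^*)$ satisfy $g(\overline{x})=g(x^*)$ and $g(\overline{y})=g(y^*)$. Using the hypothesis, pick $(u_0,v_0):=(u,v)$ so that $(F(u_0,v_0),F(v_0,u_0))$ is comparable, in the product order on $X^2$, to both $(g(\overline{x}),g(\overline{y}))=(F(\overline{x},\overline{y}),F(\overline{y},\overline{x}))$ and $(g(x^*),g(y^*))$. Since $F(X\times X)\subset g(X)$, define inductively $g(u_{n+1})=F(u_n,v_n)$, $g(v_{n+1})=F(v_n,u_n)$; this is the Picard iteration of $T$ started at $(g(u_0),g(v_0))$ in the notation of the proof of Theorem~\ref{th3}. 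The crucial observation is that, because $F$ is mixed $g$-monotone (equivalently $T$ is monotone), comparability of $(g(u_1),g(v_1))$ with the constant pair $(g(\overline{x}),g(\overline{y}))$ propagates to all $n$: feeding $g(\overline{x})\le g(u_n)$, $g(\overline{y})\ge g(v_n)$ into the two monotonicity implications yields $g(\overline{x})\le g(u_{n+1})$, $g(\overline{y})\ge g(v_{n+1})$, and symmetrically in the reverse direction. Hence \eqref{Bhas2} may be applied at every step to the comparable pairs $(\overline{x},\overline{y})$ and $(u_n,v_n)$, giving
\[
D_{n+1}\le \varphi(D_n),\qquad D_n:=\tfrac12\bigl[d(g(\overline{x}),g(u_n))+d(g(\overline{y}),g(v_n))\bigr].
\]
Because the ``target'' $(g(\overline{x}),g(\overline{y}))$ is now fixed, I do not need the full Cauchy machinery of Theorem~\ref{th3}: the monotone-limit argument using $(i_\varphi)$ and $(ii_\varphi)$ alone forces $D_n\to 0$, so $g(u_n)\to g(\overline{x})$ and $g(v_n)\to g(\overline{y})$. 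Repeating verbatim against $(x^*,y^*)$ gives $g(u_n)\to g(x^*)$, $g(v_n)\to g(y^*)$, and the triangle inequality together with uniqueness of limits yields $g(\overline{x})=g(x^*)$, $g(\overline{y})=g(y^*)$.

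\textbf{Step 2 (existence of a common fixed point).} Taking the coincidence point $(\overline{x},\overline{y})$ from Theorem~\ref{th3} and setting $\overline{z}:=g(\overline{x})$, $\overline{w}:=g(\overline{y})$, I would use commutativity (Definition~\ref{def3}) to compute
\[
g(\overline{z})=g(F(\overline{x},\overline{y}))=F(g(\overline{x}),g(\overline{y}))=F(\overline{z},\overline{w}),
\]
and likewise $g(\overline{w})=F(\overline{w},\overline{z})$, so $(\overline{z},\overline{w})$ is again a coincidence point. Applying Step~1 to $(\overline{x},\overline{y})$ and $(\overline{z},\overline{w})$ gives $g(\overline{z})=g(\overline{x})=\overline{z}$ and $g(\overline{w})=g(\overline{y})=\overline{w}$, whence $\overline{z}=g(\overline{z})=F(\overline{z},\overline{w})$ and $\overline{w}=g(\overline{w})=F(\overline{w},\overline{z})$, i.e. $(\overline{z},\overline{w})$ is a coupled common fixed point.

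\textbf{Step 3 (uniqueness).} If $(\overline{z},\overline{w})$ and $(z',w')$ are two coupled common fixed points they are in particular coincidence points, so Step~1 gives $g(\overline{z})=g(z')$ and $g(\overline{w})=g(w')$; since each coordinate is a fixed point of $g$, this reads $\overline{z}=z'$ and $\overline{w}=w'$. The main obstacle throughout is Step~1, and within it the verification that comparability survives the iteration --- this is exactly what licenses the repeated use of \eqref{Bhas2} --- after which the convergence $D_n\to 0$ is the same computation as for $\delta_n\to 0$ in the proof of Theorem~\ref{th3}; Steps~2 and~3 are then short bookkeeping resting on commutativity and Step~1.
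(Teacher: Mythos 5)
Your proposal is correct and follows essentially the same route as the paper's own proof: the auxiliary iteration $g(u_{n+1})=F(u_n,v_n)$, $g(v_{n+1})=F(v_n,u_n)$ started from the comparability witness, the propagation of comparability via mixed $g$-monotonicity, the recursion $\Delta_{n+1}\leq\varphi(\Delta_n)$ forcing convergence to the common coincidence value, and then the commutativity argument identifying $(g(\overline{x}),g(\overline{y}))$ as a further coincidence point to extract the unique common fixed point. No substantive differences to report.
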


\begin{proof} From Theorem \ref{th3}, the set of coupled coincidences of $F$ and $g$ is nonempty. Assume that $Z^*=(x^*,y^*)\in X^2$ and $\overline{Z}=(\overline{x},\overline{y})$ are two coincidence points of $F$ and $g$. We shall prove that $g(x^*)=g(\overline{x})$ and $g(y^*)=g(\overline{y})$.

By hypothesis, there exists $(u,v)\in X^2$ such that $(F(u,v),F(v,u))$ is comparable to $(F(x^*,y^*), F(y^*,x^*))$ and to $(F(\overline{x},\overline{y}), F(\overline{x},\overline{y}))$. 
Put $u_0=u,\,v_0=v$ and choose $u_1,v_1\in X$ so that $g(u_{1})=F(u_0,v_0),\,g(v_{1})=F(v_0,u_0)$. Then, similarly to the proof of Theorem \ref{th3}, we obtain the sequences $\{g(u_n)\}$, $\{g(v_n)\}$ defined as follows:
$$
g(u_{n+1})=F(u_n,v_n),\,g(v_{n+1})=F(v_n,u_n),\,n\geq 0.
$$
Now construct in the same manner the sequences $\{g(x_n)\}$, $\{g(y_n)\}$, $\{g(x^*_n)\}$, $\{g(y^*_n)\}$, by setting $x_0=\overline{x}$, $y_0=\overline{y}$, $x^*_0=\overline{x}$ and $y^*_0=\overline{y}$, respectively. This means that, for all $\,n\geq 0$,
$$
g(x_{n})=F(\overline{x},\overline{y}),\,g(y_{n})=F(\overline{y},\overline{x});\,g(x^*_{n})=F(x^*_0,y^*_0),\,g(y^*_{n})=F(y^*_0,x^*_0).
$$
Since  $(F(\overline{x},\overline{y}),F(\overline{y},\overline{x}))=(g(x_1),g(y_1))=(g(\overline{x}),g(\overline{y}))$ and \\ $(F(u,v),F(v,v))=(g(u_1),g(v_1))$ are comparable, it follows that $g(\overline{x})\leq g(u_1)$ and $g(\overline{y})\geq g(v_1)$. 

Further, we easily show that $g((\overline{x}),g(\overline{y}))$ and $(g(u_n),g(v_n))$ are comparable, that is, $g(\overline{x})\leq g(u_n)$ and  $g(\overline{y})\geq g(v_n)$, for all $n\geq 1$.

Thus, by the contractive condition \eqref{Bhas2}, one get
$$
\frac{d(g(\overline{x}),g(u_{n+1}))+d(g(\overline{y}),g(v_{n+1}))}{2}=
$$
$$
=\frac{d(F(\overline{x},\overline{y}), F(u_n,v_n))+d(F(\overline{y}, \overline{x}),F(v_n,u_n))}{2}\leq
$$
\begin{equation} \label{eq-11}
\leq \varphi\left(\frac{d(g(\overline{x}),g(u_{n}))+d(g(\overline{y}),g(v_{n}))}{2}\right).
\end{equation}
Thus, by \eqref{eq-11}, we deduce that the sequence $\{\Delta_n\}$ defined by
$$
\Delta_n=\frac{d(g(\overline{x}),g(u_{n}))+d(g(\overline{y}),g(v_{n}))}{2},\,n\geq 0,
$$
satisfies
\begin{equation} \label{eq-12}
\Delta_{n+1}\leq \varphi(\Delta_n),\,n\geq 0.
\end{equation}
Now use $(i_{\varphi})$ to deduce by \eqref{eq-12} that  $\{\Delta_n\}$ is non-decreasing, hence convergent to some $\delta\geq 0$.

We shall prove that $\delta= 0$. Assume, to the contrary, that $\delta> 0$. Then, we can find a $n_0$ such that $\Delta_n\geq \delta >0$, for all $n\geq n_0$. By letting $n\rightarrow \infty$ in \eqref{eq-12} we obtain, in view of $(ii_{varphi})$, that
$$
\delta \leq \lim_{n\rightarrow \infty} \varphi(\Delta_n)=\lim_{\Delta_n\rightarrow \delta+} \varphi(\Delta_n)<\delta,
$$
a contradiction. Therefore
  $d(g(\overline{x}),g(u_{n+1}))+d(g(\overline{y}),g(v_{n+1}))\rightarrow0$ as $n\rightarrow \infty$, that is,
\begin{equation} \label{eq-13}
\lim_{n\rightarrow \infty} d(g(\overline{x}),g(u_{n+1}))=0,\textnormal{ and } \lim_{n\rightarrow \infty} d(g(\overline{y}),g(v_{n+1}))=0.
\end{equation}
Similarly, we obtain that
\begin{equation} \label{eq-14}
\lim_{n\rightarrow \infty} d(g(x^*),g(u_{n+1}))=0,\textnormal{ and } \lim_{n\rightarrow \infty} d(g(y^*),g(v_{n+1}))=0.
\end{equation}
By \eqref{eq-13} and \eqref{eq-14} and the triangle inequality, we have
$$
d(g(\overline{x}),g(x^*))\leq d(g(\overline{x}),g(u_{n+1}))+d(g(x^*),g(u_{n+1}))\rightarrow 0 \textnormal{ as } n\rightarrow \infty,
$$
$$
d(g(\overline{y}),g(y^*))\leq d(g(\overline{y}),g(v_{n+1}))+d(g(y^*),g(v_{n+1}))\rightarrow 0 \textnormal{ as } n\rightarrow \infty.
$$
Hence 
\begin{equation} \label{eq-15}
g(\overline{x})=g(x^*) \textnormal{ and } g(\overline{y})=g(y^*),
\end{equation}
that is, $F$ and $g$ have a unique coupled coincidence. Now we shall prove that actually $F$ and $g$ have a unique coupled common fixed point. Since
$$
g(\overline{x})=  F\left(\overline{x},\overline{y}\right)\textnormal{ and } g(\overline{y})= F\left(\overline{y},\overline{x}\right),
$$
and $F$ and $g$ commutes, we have
\begin{equation} \label{eq-16}
g(g(\overline{x}))=  g(F\left(\overline{x},\overline{y}\right))= F\left(g(\overline{x}),g(\overline{y})\right),
\end{equation}
and
\begin{equation} \label{eq-17}
g(g(\overline{y}))=  g(F\left(\overline{y},\overline{x}\right))= F\left(g(\overline{y}),g(\overline{x})\right).
\end{equation}
Denote $g(\overline{x})=\overline{z}$ and $g(\overline{y})=\overline{w}$. Then, by \eqref{eq-16} and \eqref{eq-17} one gets
$$
g(\overline{z})=F\left(\overline{z},\overline{w}\right) \textnormal{ and } g(\overline{w})=F\left(\overline{w},\overline{z}\right).
$$
Thus, $\left(\overline{z},\overline{w}\right)$ is a coupled coincidence point of $F$ and $g$. Then, by \eqref{eq-15} with $x^*=\overline{z}$ and $y^*=\overline{w}$, it follows that $g(\overline{z})=g(\overline{x})$ and $g(\overline{w})=g(\overline{y})$, that is
\begin{equation} \label{eq-18}
g(\overline{z})=\overline{z} \textnormal{ and } g(\overline{w})=\overline{w}. 
\end{equation}
Now from \eqref{eq-15} and \eqref{eq-18} we get
$$
\overline{z}=g(\overline{z})=F\left(\overline{z},\overline{w}\right) \textnormal{ and } \overline{w}=g(\overline{w})=F\left(\overline{w},\overline{z}\right).
$$
Therefore $\left(\overline{z},\overline{w}\right)$ is a coupled common fixed point of $F$ and $g$.

To prove the uniqueness, assume $(p,q)$ is another coupled common fixed point of $F$ and $g$. Then by \eqref{eq-18} we have
$$
p=g(p)=g(\overline{z})=\overline{z} \textnormal{ and } q=g(q)=g(\overline{w})=\overline{w}.
$$
\end{proof}

\begin{corollary} \label{cor3}
In addition to  the  hypotheses  of Corollary \ref{cor1},  suppose that for every $(\overline{x}, \overline{y}), (y^* , x^* ) \in X \times X$ there exists $(u,v)\in X\times X$ such that $(F(u,v),F(v,u))$ is comparable to $(F(x^*,y^*), F(y^*,x^*))$ and to $(F(\overline{x},\overline{y}), F(\overline{x},\overline{y}))$. Then $F$ and $g$ have a
unique coupled common fixed  point.
\end{corollary}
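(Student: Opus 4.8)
The plan is to recognize that Corollary \ref{cor3} is simply the linear specialization of Theorem \ref{th4}, in exact parallel to the way Corollary \ref{cor1} is the linear specialization of Theorem \ref{th3}. The entire argument reduces to choosing an appropriate comparison function $\varphi$ and checking that it belongs to the class $\Phi$, after which the full uniqueness conclusion is inherited verbatim from Theorem \ref{th4}.

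First I would put $\varphi(t)=kt$ with $0\leq k<1$ and verify that $\varphi\in\Phi$. Property $(i_\varphi)$ holds because $kt<t$ for every $t\in(0,\infty)$ when $k<1$, and property $(ii_\varphi)$ holds because $\lim_{r\to t+}\varphi(r)=\lim_{r\to t+}kr=kt<t$ for every $t\in(0,\infty)$. With this choice the contractive inequality \eqref{Bhas3} assumed in Corollary \ref{cor1} is identical to \eqref{Bhas2} of Theorem \ref{th3}, since $2\varphi\left(\frac{d(g(x),g(u))+d(g(y),g(v))}{2}\right)=k\left[d(g(x),g(u))+d(g(y),g(v))\right]$. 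Hence the hypotheses of Corollary \ref{cor1} are precisely the hypotheses of Theorem \ref{th3} for this particular $\varphi$.

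Next I would note that all the remaining assumptions of Corollary \ref{cor3} transfer directly to Theorem \ref{th4}: the range condition $F(X\times X)\subset g(X)$, the continuity of $g$, the commutativity of $F$ and $g$, alternative (a) or (b), the existence of $x_0,y_0$ satisfying \eqref{mic1}, and the extra comparability requirement on pairs $(F(u,v),F(v,u))$ are all stated word for word in both results. Consequently Theorem \ref{th4}, applied with $\varphi(t)=kt$, yields at once the existence and uniqueness of a coupled common fixed point of $F$ and $g$. I do not expect any genuine obstacle here, as the statement is a pure specialization; the only thing demanding (trivial) verification is the membership $\varphi(t)=kt\in\Phi$ carried out above, and once that is in place the conclusion follows immediately from Theorem \ref{th4}.
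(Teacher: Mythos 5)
Your proposal is correct and matches the paper's (implicit) argument exactly: Corollary \ref{cor3} is obtained by taking $\varphi(t)=kt$, $0\leq k<1$, in Theorem \ref{th4}, just as Corollary \ref{cor1} is obtained from Theorem \ref{th3}, and your verification that $\varphi\in\Phi$ is the only substantive check needed. The paper leaves this proof unstated, so your version is if anything slightly more explicit than the original.
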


\begin{corollary} \label{cor2}
Let  $\left(X,\leq\right)$ be a partially ordered set and suppose there is a metric $d$  on $X$ such that $\left(X,d\right)$  is a complete metric space. Let $F : X \times X \rightarrow X $ be a mixed monotone mapping for which there exist $k\in [0,1)$ such that for all $x,y,u,v\in X$ with $x \geq u, y \leq  v$,
$$
d\left(F\left(x,y\right),F\left(u,v\right)\right)+
$$
\begin{equation} \label{Bhas4}
+d\left(F\left(y,x\right),F\left(v,u\right)\right)  \leq k \left[d\left(x,u\right) + d\left(y,v\right)\right].
\end{equation}  
Suppose  either

(a) $F$ is continuous or

(b) $X$ satisfy Assumption \ref{1.1}.

If there exist $x_{0}, y_{0} \in X$ such that  \eqref{mic} is satisfied, 
then $F$ has a coupled fixed point.
\end{corollary}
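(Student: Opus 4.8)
The plan is to obtain the statement as a direct specialization of Theorem \ref{th3}, choosing $g$ to be the identity mapping $\mathrm{id}_X$ on $X$ and $\varphi(t) = kt$. Under these choices a coupled coincidence point of $F$ and $g$ is exactly a coupled fixed point of $F$, so the whole task reduces to checking that every hypothesis of Theorem \ref{th3} is satisfied.

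First I would verify that $\varphi(t) = kt$ belongs to the class $\Phi$: since $0 \leq k < 1$ we have $\varphi(t) = kt < t$ for all $t > 0$, which is $(i_\varphi)$, and $\lim_{r \to t+}\varphi(r) = kt < t$, which is $(ii_\varphi)$. Next, with $g = \mathrm{id}_X$ the mixed $g$-monotone property becomes the ordinary mixed monotone property, and the contractive condition \eqref{Bhas2} takes the form
$$
d(F(x,y),F(u,v)) + d(F(y,x),F(v,u)) \leq 2\varphi\!\left(\frac{d(x,u)+d(y,v)}{2}\right) = k\left[d(x,u)+d(y,v)\right],
$$
which is precisely \eqref{Bhas4}; likewise the initial condition \eqref{mic} is carried over verbatim. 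Finally, the three structural requirements on $g$ in Theorem \ref{th3} hold trivially for the identity: $F(X \times X) \subset g(X) = X$, the map $\mathrm{id}_X$ is continuous, and it commutes with $F$ since $g(F(x,y)) = F(x,y) = F(g(x),g(y))$, while the dichotomy (a)/(b) is inherited unchanged.

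With all hypotheses verified, Theorem \ref{th3} yields $\overline{x}, \overline{y} \in X$ with $g(\overline{x}) = F(\overline{x},\overline{y})$ and $g(\overline{y}) = F(\overline{y},\overline{x})$, that is $\overline{x} = F(\overline{x},\overline{y})$ and $\overline{y} = F(\overline{y},\overline{x})$, a coupled fixed point of $F$. There is no genuine analytic difficulty in this argument; the only point needing (routine) care is confirming that $\varphi(t) = kt$ lies in $\Phi$ and that the substitution $g = \mathrm{id}_X$ faithfully collapses \eqref{Bhas2} and the auxiliary $g$-hypotheses to the data of the corollary, after which the conclusion is immediate.
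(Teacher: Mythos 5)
Your proposal is correct and follows exactly the paper's own argument, which also derives the corollary by taking $g(x)=x$ and $\varphi(t)=kt$ with $0\leq k<1$ in Theorem \ref{th3}. You simply spell out the routine verifications (that $\varphi\in\Phi$ and that the $g$-hypotheses collapse trivially for the identity) that the paper leaves implicit.
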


\begin{proof}
Take $g(x)=x$ and $\varphi(t)=k t$, $0\leq k<1$ in Theorem \ref{th3}.
\end{proof}

\begin{remark} \em

Corollary \ref{cor2} is a generalization of Theorem \ref{th1} (Theorem 2.1 in \cite{Bha}). Note also that in \cite{Bha} and \cite{LakC} the authors use only condition \eqref{mic}, although the alternative assumption \eqref{mare} is also applicable.
\end{remark}

\begin{remark}\em
As a final conclusion, we note that our results in this paper improve all coincidence point theorems and coupled fixed point theorems in \cite{LakC} and \cite{Bha}, and also many other related results: \cite{Ber11a}-\cite{Luong}, for coupled fixed point results and \cite{Aga}, \cite{Nie06}-\cite{Ran}, for fixed point results, by considering the more general (symmetric) contractive condition \eqref{Bhas2}.  By replacing the commutativity of $F$ and $g$ by the more general property "$F$ and $g$ are compatible", we can also extend the results in \cite{Choud}. This will be done in a forthcoming paper.
\end{remark}

\vskip 0.5 cm {\it 

Department of Mathematics and Computer Science

North University of Baia Mare

Victoriei 76, 430122 Baia Mare ROMANIA

E-mail: vberinde@ubm.ro}
\end{document}